\documentclass[letterpaper, 12pt]{amsart}

\usepackage{latexsym,mathrsfs,amscd,amsmath,verbatim,amssymb,calc,enumerate,amsxtra,
ifpdf}
\usepackage[all]{xy}
\ifpdf%
  \usepackage{hyperref}%hyperlinks in PDF
\else%
  \usepackage[hypertex]{hyperref}%hyperlinks in DVI; use dvips -z foo.dvi
\fi%

\allowdisplaybreaks[1]

\setlength{\oddsidemargin}{0in}
\setlength{\evensidemargin}{0in}
\setlength{\textwidth}{6.5in}
\setlength{\textheight}{9in}
\setlength{\topmargin}{-0.2in}

\theoremstyle{plain}
\newtheorem{theorem}{Theorem}
\newtheorem{corollary}[theorem]{Corollary}

\newtheorem*{theorem*}{Theorem}

\theoremstyle{remark}

\theoremstyle{definition}

\newtheorem{question}[theorem]{Question}

\numberwithin{equation}{theorem}

{\noindent\ignorespaces}%
{\par\noindent%
\ignorespacesafterend}

\newcommand{\ZZ}{\mathbb{Z}}
\newcommand{\m}{\mathfrak{m}}

\newcommand{\e}{\operatorname{e}}

\begin{document}

\title[Periodic terms of Hilbert--Kunz functions]{A Hilbert--Kunz function with a periodic term that has a given period}
\author{Robin Baidya}
\email{rbaidya@utk.edu}
\address{Department of Mathematics, The University of Tennessee, Knoxville, Tennessee 37996}
\curraddr{}
\thanks{The author would like to thank Florian Enescu for sharing the interesting example of Monsky that inspired this paper.  The author would also like to thank Yongwei Yao, Felipe P\'erez, Siang Ng, and Irina Ilioaea for helpful comments during a presentation of these results at Georgia State University.}
\date{\today}
\keywords{Hilbert--Kunz function, period} 
\subjclass[2010]{Primary 13D40}
%13-XX Commutative rings and algebras
	%13Dxx		Homological methods
		%13D40  	Hilbert-Samuel and Hilbert--Kunz functions; Poincaré series
\begin{abstract}
A result of Monsky states that the Hilbert--Kunz function of a one-dimensional local ring of prime characteristic has a term $\phi$ that is eventually periodic.  For example, in the case of a power series ring in one variable over a prime-characteristic field, $\phi$ is the zero function and is therefore immediately periodic with period 1.  In additional examples produced by Kunz and Monsky, $\phi$ is immediately periodic with period~2.  We show that, for every positive integer~$\pi$, there exists a ring for which $\phi$ is immediately periodic with period~$\pi$.
\end{abstract}
\commby{}
\maketitle

\setcounter{section}{+0} %to make sure that the introduction is Section 0

Let $(R,\m)$ be a one-dimensional commutative Noetherian local ring of prime characteristic~$p$.  In this note, we study the \textit{Hilbert--Kunz function $HK_{\m,R}$}, which sends every nonnegative integer $e$ to the length of $R/\m^{[p^e]}$ over~$R$, where $\m^{[p^e]}:=(r^{p^e}:r\in\m)$.

The general form of $HK_{\m,R}$ is known:  Monsky has proved that there exist a positive integer $\e_{HK}(\m,R)$, called the \textit{Hilbert--Kunz multiplicity of~$R$}, and an eventually periodic function $\phi_{\m,R}$ such that 
\[
HK_{\m,R}(e)=\e_{HK}(\m,R)\cdot p^e-\phi_{\m,R}(e)
\]
for every nonnegative integer~$e$~\cite[Theorem~3.10]{Mon}.

Also, there are algorithms for specialized settings:  Kreuzer has covered the case in which $R=\hat{S}^M$, where $S$ is a standard graded algebra over a field and $M$ is the maximal homogeneous ideal of~$S$~\cite[Algorithm~5.4]{Kre}.  Upadhyay has obtained a formula valid in the event that $S$ is a polynomial ring over a field modulo a binomial with zero constant term (and $R=\hat{S}^M$ with $M$ the maximal homogeneous ideal of~$S$ once again)~\cite[Theorem~26]{Upa}.

Despite these advances, explicit values of $HK_{\m,R}$ are still scarce.  Moreover, when values are available, $\phi_{\m,R}$ does not differ much from one example to the next:  For instance, in the six computations achieved by Kreuzer~\cite[Examples~6.1--6.6]{Kre}, the function $\phi_{\m,R}$ is eventually constant in four cases, and in the remaining two examples, which are attributable to Kunz~\cite[Example~4.6b]{Kun1} and Monsky~\cite[page~46]{Mon}, the function $\phi_{\m,R}$ is immediately periodic with period~2.

In response to the lack of diversity in concrete examples of~$\phi_{\m,R}$, we show here that, for every positive integer~$\pi$, there exists a ring $(R,\m)$ for which $\phi_{\m,R}$ is immediately periodic with period~$\pi$.  To accomplish our goal, we first provide an infinite family of rings whose Hilbert--Kunz functions we can compute explicitly.  To illustrate the content of our result, we recover Monsky's example with period 2 and give new examples with periods 3 and~4.  We then appeal to a classical observation of Dirichlet to prove our main theorem.

\begin{theorem}\label{prelim}
  Let $(R,\m):=k[[x,y]]/(x^n-y^n)$, where $x$ and $y$ are indeterminates, $k$ is a field of prime characteristic~$p$, and $n$ is an integer that is greater than $1$ and not divisible by~$p$.  Let $\pi$ denote the period eventually exhibited by~$\phi_{\m,R}$, and let $\omega$ stand for the order of $p+n\ZZ$ in the multiplicative group of units of $\ZZ/n\ZZ$.  Then the following statements hold:
\begin{enumerate}
\item  Let $e$ be a nonnegative integer, and let $b$ be an integer satisfying $p^e\equiv b$ \textnormal{(mod $n$)} and $1\leqslant b\leqslant n-1$.  Then $HK_{\m,R}(e)=np^e-b(n-b)$. 
\item  $\phi_{\m,R}$ is immediately periodic with $\pi\in\{\omega/2,\omega\}$.
\item  $\pi=\omega/2$ if and only if $\omega$ is even and $p^{\omega/2}\equiv -1$\textnormal{ (mod }$n$\textnormal{)}.
\end{enumerate}
\end{theorem}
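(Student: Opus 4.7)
My plan is to compute $HK_{\m,R}(e)$ explicitly from the free $k[[y]]$-module structure of $R$, then extract $\phi_{\m,R}$ from the resulting closed form and analyze its period using the order of $p$ modulo $n$.

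For part (1), I would exploit that $x^n-y^n$ is monic of degree $n$ in $x$ over $k[[y]]$, which realizes $R$ as a free $k[[y]]$-module with basis $\{1, x, \ldots, x^{n-1}\}$. Writing $p^e = qn + b$ with $1\leq b\leq n-1$ (possible because $\gcd(n,p)=1$), the relation $x^n = y^n$ gives $x^{p^e} = y^{qn}x^b$ in $R$, so the ideal $I := (x^{p^e}, y^{p^e})$ is generated by $y^{qn}x^b$ and $y^{qn+b}$. Multiplying $y^{qn}x^b$ by $x^i$ for $0\leq i\leq n-1$ contributes $y^{qn}x^{b+i}$ when $b+i<n$ and $y^{qn+n}x^{b+i-n}$ when $b+i\geq n$, while multiplying $y^{qn+b}$ by $x^i$ contributes $y^{qn+b}x^i$. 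Reading off the $k[[y]]$-annihilator of each summand $k[[y]]x^j$ in $R/I$, one gets $(y^{qn+b})$ for $0\leq j<b$ and $(y^{qn})$ for $b\leq j\leq n-1$. Summing $k$-dimensions across all $n$ summands then yields
\[
HK_{\m,R}(e) = b(qn+b) + (n-b)qn = qn^2 + b^2 = np^e - b(n-b).
\]

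For parts (2) and (3), the formula from (1) identifies $\e_{HK}(\m,R) = n$ and $\phi_{\m,R}(e) = b_e(n-b_e)$, where $b_e := p^e \bmod n$ lies in $\{1,\ldots,n-1\}$. Since $(b_e)_{e\geq 0}$ is immediately periodic with period exactly $\omega$ by the definition of $\omega$, so is $\phi_{\m,R}$, and its minimal period $\pi$ divides $\omega$. The elementary identity $b(n-b)=b'(n-b')$ for $b,b'\in\{1,\ldots,n-1\}$ holds iff $b'\in\{b,n-b\}$; evaluating at $e=0$, so $b_0=1$, forces $p^\pi\equiv\pm 1 \pmod n$, and conversely either congruence makes $b_{e+\pi}\in\{b_e,n-b_e\}$ and hence $\phi_{\m,R}(e+\pi)=\phi_{\m,R}(e)$. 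Minimizing $\pi$: a ``$+1$'' forces $\omega\mid\pi$ and hence $\pi=\omega$; a ``$-1$'' forces $\omega\mid 2\pi$ with $\omega\nmid\pi$, which requires $\omega$ even and $\pi=\omega/2$. Thus $\pi\in\{\omega/2,\omega\}$ as in (2), and $\pi=\omega/2$ precisely when $p^{\omega/2}\equiv -1 \pmod n$ as in (3).

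The main obstacle is the column-wise length computation in part (1): one must confirm rigorously that $R$-multiplication of the two generators $y^{qn}x^b$ and $y^{qn+b}$, combined with $k[[y]]$-scaling, produces exactly the monomial annihilator ideals listed in each $k[[y]]$-summand, with no additional relations emerging from cross-summand mixing. This is ultimately because the generators are themselves monomials in the chosen basis and $R$-multiplication by $x$ merely shifts columns (with a $y^n$ wrap-around when $b+i\geq n$), so the ideal $I$ respects the direct-sum decomposition. Once that decomposition is in hand, parts (2) and (3) reduce cleanly to the symmetry of $b\mapsto b(n-b)$ about $n/2$ together with the definition of the multiplicative order $\omega$.
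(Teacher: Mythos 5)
Your proof is correct, and part (1) takes a genuinely different route from the paper's. The paper works in $T=k[x,y]/(x^{q},y^{q},x^n-y^n)$ and shows that $\{x^by^{q-b},\,y^{q},\,x^n-y^n\}$ is a Gr\"obner basis for lex($x>y$), after which $HK_{\m,R}(e)$ is the count of standard monomials; it also needs a small case split on $q<n$ versus $q>n$. You instead use that $x^n-y^n$ is monic in $x$, so $R=\bigoplus_{j=0}^{n-1}k[[y]]x^j$, and observe that the Frobenius-power generators $y^{qn}x^b$ and $y^{qn+b}$ are monomials in this basis, so the ideal splits as $\bigoplus_j J_jx^j$ with $J_j=(y^{qn+b})$ for $j<b$ and $J_j=(y^{qn})$ for $j\geqslant b$; the length computation is then a sum over columns and handles $p^e<n$ uniformly (there $q=0$ and the answer degenerates to $b^2=p^{2e}$ with no separate case needed). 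The one point you rightly flag --- that the ideal respects the direct-sum decomposition --- is exactly right and is justified by your observation that every $k[[y]]$-module generator $x^i\cdot(\text{monomial generator})$ lands in a single summand, so no cross-summand relations can arise. Your parts (2) and (3) are essentially the paper's argument: both reduce to $\phi_{\m,R}(e)=b_e(n-b_e)$ with $b_e\equiv p^e$ (mod $n$), the divisibilities $\pi\mid\omega$ and $\omega\mid 2\pi$, and the observation that $b(n-b)=b'(n-b')$ forces $b'\in\{b,n-b\}$; your factorization $(b-b')(n-b-b')$ is a slightly cleaner way to package the congruence manipulation the paper does at $e=0$.
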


\begin{proof} (1)  Let $I$ denote the ideal $(x^q,y^q,x^n-y^n)$ of~$k[x,y]$, where~$q:=p^e$, and let $T:=k[x,y]/I$.  Then $HK_{\m,R}(e)=\dim_k(T)$.  Hence, we must show that $\dim_k(T)=nq-b(n-b)$.

If $q<n$, then $I=(x^q,y^q)$, and so $\dim_k(T)=q^2=nq-b(n-b)$, completing the proof.  Of course, $q\neq n$ since, by hypothesis, $n$ is not divisible by~$p$.  Therefore, we may assume that $q>n$.

Next, we establish that $G:=\{x^by^{q-b},y^q,x^n-y^n\}$ is a Gr\"{o}bner basis for $I$ with respect to~lex($x>y$), the lexicographic monomial order induced by the relation $x>y$.  Certainly, $I$ is equal to the ideal of $k[x,y]$ generated by $G$ since
\[
x^q-x^by^{q-b}=(x^{q-n}y^0+x^{q-2n}y^n+\cdots+x^by^{q-b-n})(x^n-y^n)\in I\cap (G)
\]
and since $y^q,x^n-y^n\in I\cap(G)$ by definition.  To verify that $G$ satisfies Buchberger's Criterion, we note that
\[
S(x^by^{q-b},y^q)=y^b(x^by^{q-b})-x^b(y^q)=0
\]
and that
\[
S(x^by^{q-b},x^n-y^n)=x^{n-b}(x^by^{q-b})-y^{q-b}(x^n-y^n)=y^{q+n-b}
\]
and
\[
S(y^q,x^n-y^n)=x^n(y^q)-y^q(x^n-y^n)=y^{q+n}
\]
are divisible by~$y^q$ and no other member of~$G$.  Hence $G$ is a Gr\"{o}bner basis for $I$ with respect to lex($x>y$).

Now let $A$ be the set of monomials in $k[x,y]$ not divisible by any of the leading terms of the members of~$G$, and let $^-$ denote the natural projection of $k[x,y]$ onto~$T$.  Then $\{\overline{f}:f\in A\}$ is a basis for $T$ over~$k$.  Hence $\dim_k(T)=|A|=nq-b(n-b)$.

(2)  Write $\phi:=\phi_{\m,R}$.  Then Part~(1) implies that $\phi$ is immediately periodic with $\pi$ dividing~$\omega$.  Hence $\phi(\pi)=\phi(0)$.  Part (1) now gives us $p^{\pi}(n-p^{\pi})\equiv 1(n-1)$ (mod $n$), which implies that $p^{2\pi}\equiv 1$ (mod $n$), which in turn implies that $\omega$ divides~$2\pi$.  Hence $\pi\in\{\omega/2,\omega\}$.

(3)  Let $\phi:=\phi_{\m,R}$, and suppose that $\pi=\omega/2$.  Since $\pi$ is an integer, $\omega$ must be even.  Let $b$ be an integer such that $p^{\pi}\equiv b$ (mod $n$) and $1\leqslant b \leqslant n-1$.  Since Part~(2) yields $\phi(0)=\phi(\pi)$, we get $1(n-1)= b(n-b)$.  Solving for $b$, we get $b\in\{1,n-1\}$.  If $b=1$, then $p^{\pi}\equiv b\equiv 1$ (mod $n$), and so $\omega$ divides~$\pi=\omega/2$, a contradiction.  Hence $b=n-1$, and so $p^{\omega/2}=p^{\pi}\equiv b\equiv -1$ (mod $n$).

Conversely, suppose that $\omega$ is even and that $p^{\omega/2}\equiv -1$ (mod $n$).  Let $e$ be an integer with $0\leqslant e \leqslant (\omega/2)-1$, and let $c$ be an integer satisfying $p^e\equiv c$ (mod $n$) and $1\leqslant c \leqslant n-1$.  Since $p^{\omega/2}\equiv -1$ (mod $n$) by hypothesis, $p^{[e+(\omega/2)]}\equiv -c \equiv n-c$ (mod $n$).  So, by Part (1),
\[
\phi(e+(\omega/2))=(n-c)[n-(n-c)]=c(n-c)=\phi(e).
\]
We have thus shown that $\pi$ divides $\omega/2$.  Now, by Part~(2), we get $\pi=\omega/2$.
\end{proof}

Theorem~\ref{prelim} recovers the following example of Monsky in which $\pi=\omega/2$:

\begin{corollary}[{Monsky~\cite[page~46]{Mon}}]
	Let $(R,\m)=k[[x,y]]/(x^5-y^5)$, where $x$ and $y$ are indeterminates and $k$ is a field of characteristic~$p\equiv \pm 2$\textnormal{ (mod 5)}.  Then, for every nonnegative integer $e$, we have
\[
HK_{\m,R}(e)
=\begin{cases}
5p^e-4 & \textnormal{if $e$ is even;}\\
5p^e-6 & \textnormal{if $e$ is odd.}\\
\end{cases}
\]
Hence $\pi=2=\omega/2$.
\end{corollary}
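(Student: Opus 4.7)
The plan is to apply Theorem~\ref{prelim} directly with $n=5$, so the first step is to check that the hypotheses hold: since $p\equiv \pm 2$ (mod $5$), in particular $p\neq 5$, so $5\nmid p$, and certainly $n=5>1$. Thus all three parts of the theorem are available.

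Next I would determine $\omega$ and use part~(3) to conclude $\pi=2$. The group $(\ZZ/5\ZZ)^{\times}$ has order $4$, and a direct check shows that both $2+5\ZZ$ and $3+5\ZZ=-2+5\ZZ$ have order $4$ in this group (indeed, $2^2\equiv 3^2\equiv 4\equiv -1$ (mod $5$), so neither element has order dividing~$2$). Hence $\omega=4$ under the given hypothesis on~$p$. Moreover, $p^{\omega/2}=p^2\equiv (\pm 2)^2\equiv -1$ (mod $5$), so Theorem~\ref{prelim}(3) yields $\pi=\omega/2=2$, which is the last assertion of the corollary.

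It then remains to read off the values of $HK_{\m,R}$ from Theorem~\ref{prelim}(1) at a full period. For $e=0$ the integer $b$ with $p^0\equiv b$ (mod $5$) and $1\leqslant b\leqslant 4$ is $b=1$, giving $HK_{\m,R}(0)=5\cdot 1-1\cdot 4=1=5p^0-4$. For $e=1$, the hypothesis $p\equiv \pm 2$ (mod $5$) forces $b\in\{2,3\}$, and in either case $b(5-b)=6$, giving $HK_{\m,R}(1)=5p-6$. Because $\phi_{\m,R}$ is immediately periodic with period~$2$, these two computations determine $\phi_{\m,R}$ on all of~$\NN$, yielding the displayed formula.

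There is no real obstacle here: the corollary is an immediate application of Theorem~\ref{prelim}, and the only substantive step is verifying the numerical facts $\omega=4$ and $p^2\equiv -1$ (mod $5$) needed to trigger case~(3) of the theorem.
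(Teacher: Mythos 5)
Your proof is correct and is exactly the argument the paper intends: the paper states this corollary without proof as an immediate consequence of Theorem~\ref{prelim}, and your verification that $\omega=4$, that $p^2\equiv-1\pmod 5$ (so part~(3) gives $\pi=\omega/2=2$), and the evaluation of part~(1) at $e=0,1$ is precisely how it follows. No issues.
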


The next example presents a case in which $\pi=3=\omega$.

\begin{corollary}
	Let $(R,\m)=k[[x,y]]/(x^7-y^7)$, where $x$ and $y$ are indeterminates and $k$ is a field of characteristic~$p\equiv 2$\textnormal{ (mod 7)}. Then, for every nonnegative integer $e$, we have
	\[
	HK_{\m,R}(e)
	=\begin{cases}
	7p^e-6 & \textnormal{if $e\equiv 0$ (mod 3);}\\
	7p^e-10 & \textnormal{if $e\equiv 1$ (mod 3);}\\
	7p^e-12 & \textnormal{if $e\equiv 2$ (mod 3).}\\ 
	\end{cases}
	\]
	Hence $\pi=3=\omega$.
\end{corollary}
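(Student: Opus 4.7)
The plan is to apply Theorem~\ref{prelim} directly with $n=7$. The hypothesis $p\equiv 2 \pmod 7$ ensures that $p$ is not divisible by $n=7$, so the theorem applies.

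First, I would compute $\omega$, the order of $p+7\ZZ$ in the unit group of $\ZZ/7\ZZ$. Since $p\equiv 2 \pmod 7$, this equals the order of $2$ mod $7$. The powers $2,4,8\equiv 1$ give $\omega=3$. Because $\omega=3$ is odd, part~(3) of Theorem~\ref{prelim} rules out $\pi=\omega/2$, so part~(2) forces $\pi=\omega=3$.

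Next, I would use Theorem~\ref{prelim}(1) on each residue class of $e$ modulo~$3$. For $e\equiv 0 \pmod 3$, we have $p^e\equiv 1 \pmod 7$, so $b=1$ and $HK_{\m,R}(e)=7p^e-1(7-1)=7p^e-6$. For $e\equiv 1 \pmod 3$, we have $p^e\equiv p\equiv 2 \pmod 7$, so $b=2$ and $HK_{\m,R}(e)=7p^e-2(7-2)=7p^e-10$. For $e\equiv 2 \pmod 3$, we have $p^e\equiv p^2\equiv 4 \pmod 7$, so $b=4$ and $HK_{\m,R}(e)=7p^e-4(7-4)=7p^e-12$. This exhausts all the cases and gives the desired formula.

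Since every step is a direct substitution into Theorem~\ref{prelim}, there is no real obstacle; the only thing to verify carefully is the computation of $\omega$ and the three values of~$b$ corresponding to $p^e \bmod 7$.
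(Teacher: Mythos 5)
Your proposal is correct and is exactly the intended argument: the paper states this corollary as a direct consequence of Theorem~\ref{prelim} without a separate proof, and your computation of $\omega=3$, the exclusion of $\pi=\omega/2$ since $\omega$ is odd, and the three substitutions $b=1,2,4$ into Part~(1) are all accurate.
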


Our final example illustrates that the congruence condition in Part (3) of Theorem~\ref{prelim} is not redundant:

\begin{corollary}
	Let $(R,\m)=k[[x,y]]/(x^{15}-y^{15})$, where $x$ and $y$ are indeterminates and $k$ is a field of characteristic~$p\equiv\pm 2$\textnormal{ (mod 15)}. Then, for every nonnegative integer $e$, we have
	\[
	HK_{\m,R}(e)
	=\begin{cases}
	15p^e-14 & \textnormal{if $e\equiv 0$ (mod 4);}\\
	15p^e-26 & \textnormal{if $e\equiv 1$ (mod 4);}\\
	15p^e-44 & \textnormal{if $e\equiv 2$ (mod 4);}\\	
	15p^e-56 & \textnormal{if $e\equiv 3$ (mod 4).}\\	
	\end{cases}
	\]
	Hence $\pi=4=\omega$.
\end{corollary}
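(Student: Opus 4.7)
The plan is to apply Theorem~\ref{prelim} directly, since the hypotheses are satisfied: $n=15>1$ and $p\equiv\pm 2$~(mod~15) is coprime to 15. First I will compute the order $\omega$ of $p+15\ZZ$ in $(\ZZ/15\ZZ)^\times$. The unit group has order $\varphi(15)=8$, so $\omega\in\{1,2,4,8\}$. A quick check shows that for $p\equiv 2$~(mod~15) the powers are $2,4,8,1$, and for $p\equiv -2\equiv 13$~(mod~15) the powers are $13,4,7,1$, so in both cases $\omega=4$.

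Next I will use Part~(3) of Theorem~\ref{prelim} to identify $\pi$. I need to check whether $p^{\omega/2}=p^2\equiv -1\equiv 14$~(mod~15). In both cases $p^2\equiv 4\not\equiv 14$~(mod~15), so the congruence condition fails. Therefore Part~(3) rules out $\pi=\omega/2$, and Part~(2) forces $\pi=\omega=4$.

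Finally, to obtain the explicit formula I will invoke Part~(1): for each residue class of $e$ modulo 4, I will find the integer $b\in\{1,\dots,14\}$ with $p^e\equiv b$~(mod~15) and compute $b(15-b)$. When $p\equiv 2$~(mod~15) this yields $b\in\{1,2,4,8\}$ and $b(15-b)\in\{14,26,44,56\}$; when $p\equiv 13$~(mod~15) this yields $b\in\{1,13,4,7\}$ and $b(15-b)\in\{14,26,44,56\}$. Either way the four values $14,26,44,56$ appear in the order claimed, matching the piecewise formula.

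I do not anticipate a substantive obstacle: the entire argument is a finite congruence computation in $(\ZZ/15\ZZ)^\times$, and the slight subtlety is only to confirm both congruence classes $p\equiv\pm 2$~(mod~15) produce the same list of values $b(15-b)$ in the same order, which they do because the $b$'s for $p\equiv -2$ are simply the negatives (mod~15) of those for $p\equiv 2$, and $b\mapsto b(15-b)$ is invariant under $b\mapsto 15-b$.
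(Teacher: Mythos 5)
Your proposal is correct and is exactly the derivation the paper intends: the corollary is stated as a direct consequence of Theorem~\ref{prelim}, and your finite computation in $(\ZZ/15\ZZ)^\times$ --- verifying $\omega=4$, that $p^2\equiv 4\not\equiv -1$ (mod $15$) so Part~(3) forces $\pi=\omega$, and that the values $b(15-b)$ for $b\equiv p^e$ are $14,26,44,56$ for $e\equiv 0,1,2,3$ (mod $4$) in both congruence classes of $p$ --- checks out in every detail.
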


We now recall a result of Dirichlet and prove our main theorem.

\begin{theorem}[{Dirichlet~\cite{Dir}}]\label{Dir}
  Let $r$ and $s$ be relatively prime positive integers.  Then there exist infinitely many primes congruent to $r$ modulo~$s$.
 \end{theorem}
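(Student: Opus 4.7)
The plan is to follow Dirichlet's original analytic approach via characters and $L$-series. First I would introduce the group of Dirichlet characters modulo~$s$, namely the homomorphisms $\chi\colon(\ZZ/s\ZZ)^{\times}\to\mathbb{C}^{\times}$, extended by zero to integers not coprime to~$s$. For real $\sigma>1$, each such character gives rise to the Dirichlet series
\[
L(\sigma,\chi):=\sum_{n=1}^{\infty}\frac{\chi(n)}{n^{\sigma}}=\prod_{p\text{ prime}}\frac{1}{1-\chi(p)p^{-\sigma}},
\]
where the Euler product encodes complete multiplicativity and converges absolutely on this half-line.

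Next I would take logarithms of the Euler product to obtain
\[
\log L(\sigma,\chi)=\sum_{p}\frac{\chi(p)}{p^{\sigma}}+O(1)\quad\text{as }\sigma\to 1^{+},
\]
and then apply the orthogonality relation, which says that $(1/\varphi(s))\sum_{\chi}\overline{\chi(r)}\chi(p)$ equals $1$ when $p\equiv r\,(\text{mod }s)$ and $0$ otherwise. This isolates
\[
\sum_{p\equiv r\,(s)}\frac{1}{p^{\sigma}}=\frac{1}{\varphi(s)}\sum_{\chi}\overline{\chi(r)}\log L(\sigma,\chi)+O(1).
\]
For the principal character $\chi_{0}$, the function $L(\sigma,\chi_{0})$ agrees with the Riemann zeta function up to finitely many Euler factors and therefore inherits a simple pole at $\sigma=1$, so $\log L(\sigma,\chi_{0})\to+\infty$. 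For every non-principal $\chi$, Abel summation extends $L(\sigma,\chi)$ analytically to $\sigma>0$. Hence, provided one knows $L(1,\chi)\neq 0$ for every non-principal $\chi$, the right-hand side tends to $+\infty$ as $\sigma\to 1^{+}$, which forces the set of primes $p\equiv r\,(\text{mod }s)$ to be infinite.

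The main obstacle is precisely this non-vanishing $L(1,\chi)\neq 0$ for $\chi\neq\chi_{0}$. The complex case is milder: one observes that $\prod_{\chi}L(\sigma,\chi)$ has non-negative Dirichlet coefficients with constant term $1$, so it is bounded below by $1$ for $\sigma>1$, whereas a conjugate pair of zeros at $\sigma=1$ coming from $\chi$ and $\overline{\chi}$ would overpower the simple pole of $L(\sigma,\chi_{0})$ and drag the product to $0$, a contradiction. The real case---where $\chi$ takes only the values $0$ and $\pm 1$---is the classically delicate step, and I would handle it by one of the standard routes, such as an integral argument \`{a} la Mertens or the identification of $L(1,\chi)$ with the class number of an associated quadratic field.
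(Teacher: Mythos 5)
The paper does not prove this statement at all: it imports Dirichlet's theorem as a classical black box, citing \cite{Dir}, and uses it only to manufacture a prime $n\equiv 1\ (\mathrm{mod}\ 2\pi)$ and a prime $p$ of prescribed order modulo~$n$. Your outline is precisely the classical argument that the citation points to, so in that sense you are taking the ``same approach'' as the paper, just actually carrying it out. The skeleton you give is sound: the Euler product and its logarithm, the orthogonality relation isolating $\sum_{p\equiv r\,(s)}p^{-\sigma}$, the divergence coming from the pole of $L(\sigma,\chi_0)$, and the reduction of everything to the non-vanishing $L(1,\chi)\neq 0$ for non-principal $\chi$. Your treatment of the complex case via the product $\prod_{\chi}L(\sigma,\chi)\geqslant 1$ and the conjugate-pair-of-zeros argument is also correct.

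The one place where your write-up is not yet a proof is the one you flag yourself: the non-vanishing of $L(1,\chi)$ for \emph{real} non-principal $\chi$ is asserted to be handled ``by one of the standard routes'' rather than proved. That is the genuinely hard kernel of the theorem --- everything else in your sketch is routine by comparison --- so as submitted the argument has a named but unfilled gap. If you intend this to stand as a complete proof rather than a citation with commentary, you need to execute one of those routes, e.g., the Mertens/Landau argument that a Dirichlet series with non-negative coefficients converges up to its first singularity (applied to $\zeta(\sigma)L(\sigma,\chi)$), or the class number formula. For the purposes of this paper, however, nothing is lost by leaving the theorem as a citation, which is what the author does.
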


\begin{theorem}[Main Theorem]
	Let $\pi$ be a positive integer.  Then there is a one-dimensional commutative Noetherian local ring $(R,\m)$ of prime characteristic such that $\phi_{\m,R}$ is immediately periodic with period~$\pi$.
\end{theorem}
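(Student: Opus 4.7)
The plan is to apply Theorem~\ref{prelim} to the ring $R = k[[x,y]]/(x^n - y^n)$ for a carefully chosen modulus $n$ and prime characteristic $p$. The eventual period is controlled by Parts~(2) and~(3) of that theorem, so the task is to arrange that the order $\omega$ of $p$ modulo $n$ equals exactly $\pi$, while, when $\pi$ is even, avoiding the degenerate case $p^{\pi/2} \equiv -1 \pmod n$ that would collapse the period to $\pi/2$. Both the modulus and the prime will be produced by Dirichlet's theorem (Theorem~\ref{Dir}).

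First, I would use Theorem~\ref{Dir} to choose two distinct odd primes $\ell_1 \ne \ell_2$, each congruent to $1$ modulo $\pi$ (for $\pi \geq 2$ oddness is automatic; for $\pi = 1$ take, e.g., $\ell_1 = 3$ and $\ell_2 = 5$), and set $n := \ell_1 \ell_2 > 1$. Using the Chinese Remainder Theorem, identify $(\ZZ/n\ZZ)^\times$ with $(\ZZ/\ell_1\ZZ)^\times \times (\ZZ/\ell_2\ZZ)^\times$. Since $(\ZZ/\ell_1\ZZ)^\times$ is cyclic of order $\ell_1 - 1$ and $\pi \mid \ell_1 - 1$, it contains an element $a_1$ of exact order $\pi$. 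Let $a \in (\ZZ/n\ZZ)^\times$ correspond to the pair $(a_1, 1)$; then $a$ has order exactly $\pi$. Crucially, when $\pi$ is even, $a_1^{\pi/2}$ is the unique element of order $2$ in the cyclic group $(\ZZ/\ell_1\ZZ)^\times$, hence equals $-1 \pmod{\ell_1}$, while $1^{\pi/2} = 1$; therefore $a^{\pi/2}$ corresponds to $(-1, 1)$, whereas $-1 \pmod n$ corresponds to $(-1, -1)$. In particular $a^{\pi/2} \not\equiv -1 \pmod n$.

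Next, I would apply Theorem~\ref{Dir} once more to extract a prime $p$ congruent to $a$ modulo $n$; such a prime is automatically coprime to $n$. Take any field $k$ of characteristic $p$ and set $R = k[[x,y]]/(x^n - y^n)$, so Theorem~\ref{prelim} applies. The quantity $\omega$ there equals the order of $p$ modulo $n$, which is the order of $a$, namely $\pi$. If $\pi$ is odd, then $\omega$ is odd, so Part~(3) rules out the $\omega/2$ case and the period equals $\omega = \pi$. If $\pi$ is even, then $p^{\omega/2} \equiv a^{\pi/2} \not\equiv -1 \pmod n$ by construction, so Part~(3) again rules out the $\omega/2$ case, and the period equals $\omega = \pi$.

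The main obstacle is precisely the dichotomy in Part~(2): a naive choice such as $n = \ell$ prime would make $(\ZZ/n\ZZ)^\times$ cyclic, forcing $a^{\pi/2} = -1$ whenever $\pi$ is even and collapsing the period to $\pi/2$. Introducing a second prime factor of $n$ yields an independent element of order $2$ in $(\ZZ/n\ZZ)^\times$, which is exactly what is needed to obstruct $p^{\pi/2} \equiv -1 \pmod n$; the role of Dirichlet is then merely to realize, as an actual prime characteristic, the element $a$ we have already constructed.
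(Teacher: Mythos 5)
Your proof is correct, but it reaches the conclusion through the opposite branch of the dichotomy in Theorem~\ref{prelim}(2)--(3) from the one the paper uses. The paper takes $n$ to be a single prime with $n\equiv 1\pmod{2\pi}$ and chooses $p$ of order $\omega=2\pi$ in the cyclic group $(\ZZ/n\ZZ)^\times$; cyclicity then forces $p^{\pi}\equiv-1\pmod n$, so the period is $\omega/2=\pi$ by Part~(3) --- that is, the paper deliberately engineers the ``collapse'' case. You instead aim for $\omega=\pi$ and must therefore \emph{avoid} the collapse, which, as you correctly observe, is impossible with a prime modulus when $\pi$ is even (an element of even order $\pi$ in a cyclic group has $a^{\pi/2}=-1$). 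Your fix --- taking $n=\ell_1\ell_2$ with $\ell_1,\ell_2\equiv 1\pmod\pi$ and using the Chinese Remainder Theorem to produce $a=(a_1,1)$ of exact order $\pi$ with $a^{\pi/2}=(-1,1)\neq(-1,-1)=-1$ --- is sound, and the remaining steps (Dirichlet to realize $a$ as a prime $p$, coprimality of $p$ and $n$, the $\pi=1$ case handled separately) all check out. The paper's route is shorter and stays inside cyclic groups; yours costs an extra prime and the CRT identification but buys something the paper's proof does not: it realizes every period $\pi$ in the branch $\pi=\omega$ of Part~(2), showing that the phenomenon of the $n=15$ corollary (where $\pi=\omega$ rather than $\omega/2$) occurs for all $\pi$, and it explains structurally why a composite modulus is needed there.
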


\begin{proof}
	By Theorem~\ref{Dir}, there exists a prime $n\equiv 1$ (mod $2\pi$).  Since the multiplicative group of units $U$ of  $\ZZ/n\ZZ$ is cyclic of order~$n-1$ and since $n-1$ is divisible by~$2\pi$, there exists an integer $p$ such that $p+n\ZZ$ is an element of $U$ of order~$\omega:=2\pi$.  By Theorem~\ref{Dir}, we may take $p$ to be prime.  Since $U$ is cyclic of even order, $U$ contains a unique involution, namely, $-1+n\ZZ$.  Hence $p^{\omega/2}\equiv -1$ (mod $n$).  Now let $R:=k[[x,y]]/(x^n-y^n)$, where $x$ and $y$ are indeterminates and $k$ is a field of characteristic~$p$.  Then, by Theorem~\ref{prelim}, the function $\phi_{\m,R}$ is immediately periodic with period~$\omega/2=\pi$.
\end{proof}

In light of our observations, the following question naturally arises:

\begin{question}
	Let $\phi$ be a periodic function from the set of nonnegative integers to the set of all integers.  Does there exist a one-dimensional commutative Noetherian local ring $(R,\m)$ of prime characteristic such that $\phi(e)=\phi_{\m,R}(e)$ for all sufficiently large integers~$e$?
\end{question}

\bibliographystyle{amsplain}
\bibliography{HK}

\providecommand{\bysame}{\leavevmode\hbox to3em{\hrulefill}\thinspace}
\providecommand{\MR}{\relax\ifhmode\unskip\space\fi MR }
% \MRhref is called by the amsart/book/proc definition of \MR.
\providecommand{\MRhref}[2]{%
  \href{http://www.ams.org/mathscinet-getitem?mr=#1}{#2}
}
\providecommand{\href}[2]{#2}
\begin{thebibliography}{1}

\bibitem{Dir}
P.~G.~L. Dirichlet, \emph{There are infinitely many prime numbers in all
  arithmetic progressions with first term and difference coprime}, 1837,
  Translate by R.~Stephan in 2014. Available on arXiv at
  https://arxiv.org/abs/0808.1408.

\bibitem{Kre}
M.~Kreuzer, \emph{Computing {H}ilbert--{K}unz functions of 1-dimensional graded
  rings}, Univ. Iagel. Acta Math. \textbf{45} (2007), 81--95.

\bibitem{Kun1}
E.~Kunz, \emph{Characterizations of regular local rings of characteristic $p$},
  Amer. J. Math. \textbf{91} (1969), 772--784.

\bibitem{Mon}
P.~Monsky, \emph{The {H}ilbert--{K}unz function}, Math. Ann. \textbf{263}
  (1983), 43--50.

\bibitem{Upa}
S.~Upadhyay, \emph{The {Hilbert}--{Kunz} function for binomial hypersurfaces},
  Algebra \textbf{2014} (2014), Available at
  \texttt{https://doi.org/10.1155/2014/525467}.

\end{thebibliography}

\end{document}